\newcommand{\N}{\mathbb{N}}
\newcommand{\ol}[1]{\overline{#1}}
\newtheorem{theorem}{Theorem}
\newtheorem{corollary}{Corollary}
\newtheorem{proposition}{Proposition}
\newtheorem{lemma}{Lemma}
\newcounter{fiddletheoremtemp}
\begin{document}

\title[Small Overlap Monoids II]{Small Overlap Monoids II: Automatic Structures and Normal Forms}

\keywords{small overlap monoid, rational monoid, automatic structure,
 normal form, rational subset, Kleene's theorem}
\subjclass[2000]{20M05; 20M35, 68Q45}

\maketitle

\begin{center}

    MARK KAMBITES

    \medskip

    School of Mathematics, \ University of Manchester, \\
    Manchester M13 9PL, \ England.

\end{center}

\begin{abstract}
We show that any finite monoid or semigroup presentation satisfying the small
overlap condition $C(4)$ has word problem which is a deterministic
rational relation. It follows that the set of lexicographically minimal
words forms a regular language of normal forms, and that these normal forms
can be computed in linear time. We also deduce that $C(4)$ monoids and
semigroups are rational (in the sense of Sakarovitch), asynchronous automatic,
and word hyperbolic (in the sense of Duncan and Gilman). From this it follows
that $C(4)$ monoids satisfy analogues of Kleene's theorem, and admit
decision algorithms for the rational subset and finitely generated submonoid
membership problems. We also prove some automata-theoretic results which
may be of independent interest.
\end{abstract}

\section{Introduction}

Small overlap conditions are natural combinatorial conditions on monoid
and semigroup presentations, which serve to limit the complexity of derivation
sequences between equivalent words. They are the natural semigroup-theoretic
analogues of the \textit{small cancellation conditions} extensively employed
in combinatorial and geometric group theory \cite{Lyndon77}. It has long
been known that monoids with presentations satisfying the condition $C(3)$ have decidable
word problem \cite{Higgins92,Remmers71,Remmers80}; recent research of the author
\cite{K_smallover1} has
shown that the slightly stronger condition $C(4)$ implies that the word
problem is solvable in linear time on a 2-tape Turing machine.

In this paper, we take an automatic-theoretic approach to the study of
small overlap semigroups and monoids. Our main result is that the word
problem for any $C(4)$ monoid or semigroup presentation is a deterministic
rational relation (and moreover, effectively computable as such).
It follows from results of automata theory \cite{Johnson85,Johnson86}
that the set of all words which are lexicographically minimal in their equivalence
classes forms a regular language of normal forms, and that a normal form for
any element can be computed in linear time. We are also able to deduce that
every monoid or semigroup admitting a presentation satisfying the condition
$C(4)$ is rational (in the sense of Sakarovitch \cite{Sakarovitch87}) and
hence also asynchronous automatic, and word hyperbolic (in the sense of
Duncan and Gilman \cite{Duncan04}).
Another consequence is that $C(4)$ monoids satisfy an analogue
of Kleene's theorem (see for example \cite{Hopcroft69}): their rational
subsets coincide with their recognisable subsets. It follows also that
membership is uniformly decidable for rational subsets, and hence also
for finitely generated submonoids, of such monoids.

In addition to this introduction, this article comprises four sections.
Section~\ref{sec_background} briefly reviews the definitions of monoid
and semigroup presentations, and of small overlap conditions.
Section~\ref{sec_prefmod} contains some purely automata-theoretic
results which will be used to establish our main results, and may be of
some independent interest. In Section~\ref{sec_smalloverlap} we combine
the results of the previous section with those of \cite{K_smallover1}
to prove our main theorem. Finally, in Section~\ref{sec_consequences} we
deduce some consequences.

\section{Preliminaries}\label{sec_background}

In this section we briefly recall the key definitions of semigroup and monoid
presentations and of small overlap conditions, which will be used in the rest
of this paper.

Let $A$ be a finite alphabet (set of symbols). A \textit{word} over $A$
is a finite sequence of zero or more elements from $A$.
The set of all words
over $A$ is denoted $A^*$; under the operation of \textit{concatenation}
it forms a monoid, called the \textit{free monoid} on $A$. The length of
a word $w \in A^*$ is denoted $|w|$. The unique \textit{empty word} of length
$0$ is denoted $\epsilon$; it forms the identity element of the monoid
$A^*$. The set $A^*\setminus \lbrace \epsilon \rbrace$ of non-empty
words forms a subsemigroup of $A^*$, called the \textit{free semigroup
on $A$} and denoted $A^+$. For $k \in \mathbb{N}$ we
write $A^k$, $A^{\leq k}$ and $A^{< k}$ to denote the set of words in $A^*$
of length respectively exactly $k$, less than or equal to $k$, and strictly
less than $k$. If $w \in A^*$ is a word, we write $w^R$ to denote the
\textit{reverse} of $w$, that is, the word composed of the letters of $w$
written in reverse order.

A finite \textit{monoid presentation} $\langle A \mid R \rangle$ consists of
a finite alphabet $A$ (the letters of which are called \textit{generators}),
together with a finite set $R \subseteq A^* \times A^*$ of
pairs of words (called \textit{relations}). We say that $u, v \in A^*$ are
\textit{one-step equivalent} if $u = axb$ and $v = ayb$ for some
possibly empty words $a, b \in A^*$ and relation $(x,y) \in R$ or $(y,x) \in R$.
We say that $u$ and $v$ are \textit{equivalent}, and write $u \equiv_R v$
or just $u \equiv v$, if there is a finite sequence of words beginning
with $u$ and ending with $v$, each term of which but the last is one-step
equivalent to its successor. Equivalence is clearly an equivalence relation;
in fact it is the least equivalence relation containing $R$ and compatible
with the multiplication in $A^*$. We write $\ol{u}$ for the equivalence class
of a word $u \in A^*$. The equivalence classes form a monoid
with multiplication well-defined by $\ol{u} \ \ol{v} = \ol{uv}$;
this is called the \textit{monoid presented} by the presentation.

The \textit{word problem} for a (fixed) monoid presentation
$\langle A \mid R \rangle$ is the algorithmic problem of, given as
input two words $u, v \in A^*$, deciding whether $u \equiv_R v$.

Definitions corresponding to all of those above can also be made for
semigroups (without necessarily an identity element), by taking $A^+$ in
place of $A^*$ (in all places except the definition of one-step equivalence,
where $a$ and $b$ must still be allowed to be empty).

Now suppose we have a fixed monoid or semigroup presentation
$\langle A \mid R \rangle$. We begin by recalling some basic definitions
from the theory of small overlap conditions \cite{Higgins92,Remmers71}.
A \textit{relation word} is a word which appears as one side of a relation
in $R$. A \textit{piece} is a word which appears more than once as a factor
in the relations, either as a factor of two different relation words, or as
a factor of the same relation word in two different (but possibly overlapping)
places. Let $m \in \mathbb{N}$ be a positive integer. The presentation is said to 
\textit{satisfy $C(m)$} if no relation word can be written as a product of
\textit{strictly fewer than $m$} pieces. Thus $C(1)$ says that no relation
word is empty (which in the semigroup case is a trivial requirement);
$C(2)$ says that no relation word is a factor of another.

Retaining our fixed presentation, we now recall some more specialist
terminology from \cite{K_smallover1}. For each relation word $R$, let $X_R$ and $Z_R$ denote respectively the
longest prefix of $R$ which is a piece, and the longest suffix of $R$
which is a piece. If the presentation satisfies $C(3)$ then $R$ cannot be
written as a product of two pieces, so this prefix and suffix cannot meet;
thus, $R$ admits a factorisation $X_R Y_R Z_R$ for some non-empty word $Y_R$.
If moreover the presentation satisfies the stronger condition $C(4)$ then $R$
cannot be written as a product of three pieces, so $Y_R$ is not a piece. The
converse also holds: a $C(3)$ presentation such that no $Y_R$ is a piece
is a $C(4)$ presentation. We
call $X_R$, $Y_R$ and $Z_R$ the \textit{maximal piece prefix}, the
\textit{middle word} and the \textit{maximal piece suffix} respectively
of $R$.

If $R$ is a relation word we write $\ol{R}$ for the (necessarily unique,
as a result of the small overlap condition)
word such that $(R, \ol{R})$ or $(\ol{R}, R)$ is a relation in the
presentation. We write
$\ol{X_R}$, $\ol{Y_R}$ and $\ol{Z_R}$ for $X_{\ol{R}}$, $Y_{\ol{R}}$ and
$Z_{\ol{R}}$ respectively. (This is an abuse of notation since, for example,
the word $X_R$ may be a maximal piece prefix of two distinct relation words, but
we shall be careful to ensure that the meaning is clear from the context.)

A \textit{relation prefix} of a word
is a prefix which admits a (necessarily unique, as a consequence of the
small overlap condition) factorisation of the form $a X Y$ where $X$ and $Y$
are the maximal piece prefix and middle word respectively of some relation
word $XYZ$. An \textit{overlap prefix (of length $n$)} of
a word $u$ is a relation prefix which admits an (again necessarily unique)
factorisation of the form $b X_1 Y_1' X_2 Y_2' \dots X_n Y_n$ where
\begin{itemize}
\item $n \geq 1$;
\item $b X_1 Y_1' X_2 Y_2' \dots X_n Y_n$ has no factor of the form $X_0Y_0$,
where $X_0$ and $Y_0$ are the maximal piece prefix and middle word respectively
of some relation word, beginning before the end of the prefix $b$;
\item for each $1 \leq i \leq n$, $R_i = X_i Y_i Z_i$ is a relation word with
$X_i$ and $Z_i$ the maximal piece prefix and suffix respectively; and
\item for each $1 \leq i < n$, $Y_i'$ is a proper, non-empty prefix of $Y_i$.
\end{itemize}

Let $u \in A^*$ be a word and let $p$ be a
piece. We say that $u$ is \textit{$p$-active} if $p u$ has a relation prefix
$a XY$ with $|a| < |p|$, and \textit{$p$-inactive} otherwise.

We now recall some basic definitions from automata theory.
If $A$ is an alphabet, we denote by $A^\$ $ the alphabet $A \cup \lbrace \$ \rbrace$
where $\$ $ is a new symbol not in $A$. The symbol $\$ $ will be used as an
\textit{end-marker} for certain types of automata. If $R \subseteq A_1^* \times A_2^*$
is a relation, we denote by $R^\$ $ the set
$$R^\$ \ = \ R \ (\$, \$) \ = \ \lbrace (u \$, v \$) \mid (u,v) \in R \rbrace \ \subseteq \ A_1^* \$ \times A_2^* \$ \ \subseteq \ (A_1^\$)^* \times (A_2^\$)^*.$$

A \textit{rational transducer} from an alphabet $A_1$ to an alphabet $A_2$ is a finite
directed graph with edges labelled by elements of $A_1^* \times A_2^*$, together
with a distinguished initial vertex and a set of distinguished terminal
vertices. The labelling of edges extends to a labelling of paths via the
multiplication in the direct product monoid $A_1^* \times A_2^*$. A pair
$(u,v) \in A_1^* \times A_2^*$ is \textit{accepted} by the transducer if it
labels some path from the initial vertex to a terminal vertex. The
\textit{relation accepted} by the transducer is the set of all pairs
accepted. A relation accepted by some transducer is called a
\textit{rational relation} or \textit{rational transduction}. Transductions,
which were introduced in \cite{Elgot65}, are of fundamental importance in
the theory of formal languages and automata; a detailed study can be found
in \cite{Berstel79}.

A \textit{deterministic 2-tape finite automaton} consists of 
two alphabets $A_1$ and $A_2$, a finite state set $Q$ partitioned into two
disjoint subsets $Q_1$ and $Q_2$ with a distinguished initial state and
set of distinguished terminal states, and for each $i = 1,2$ a partial
function
$$\delta_i : Q_i \times A_i^\$ \to Q.$$
Let $\mapsto$ be the smallest binary relation on $A_1^* \$ \times A_2^* \$ \times Q$
such that
\begin{itemize}
\item $(au,v, p) \mapsto (u, v, q)$ for all $a \in A_1$, $u \in A_1^* \$ $,
$v \in A_2^* \$ $, $p \in Q_1$, $q \in Q$ such that $\delta_1(p, a)$ is
defined and equal to $q$; and 
\item $(u,bv, p) \mapsto (u, v, q)$ for all $b \in A_2$, $u \in A_1^* \$ $,
$v \in A_2^* \$ $, $p \in Q_2$, $q \in Q$ such that $\delta_2(p, b)$ is
defined and equal to $q$;
\end{itemize}
and let $\mapsto^*$ be the reflexive, transitive closure of $\mapsto$.
We say that a pair $(u,v) \in A_1 \times A_2$ is \textit{accepted} by the
automaton if there exists
an initial state $q_0$ and a terminal state $q_1$ such that
that $(u \$, v \$, q_0) \mapsto^* (\epsilon, \epsilon, q_1)$. Once again,
the \textit{relation accepted} by the automaton is the set of all pairs
accepted.

A relation is called a \textit{deterministic rational relation} if it is
accepted by a deterministic 2-tape automaton, and a \textit{reverse
deterministic rational relation} if the relation
$$\lbrace (u^R, v^R) \mid (u, v) \in R \rbrace$$
is accepted by a deterministic 2-tape automaton. In general, a
deterministic rational relation need not be reverse deterministic
rational \cite[Theorem~1]{Fischer68}. Every [reverse] deterministic
rational relation is accepted by a transducer \cite{Fischer68} and so
is indeed a rational relation.
The following elementary proposition gives a partial converse to this
statement; the general
idea is well known but the
precise formulation we need does not seem to have appeared in the literature, so
for completeness we give an outline proof.
\begin{proposition}\label{prop_dettransducer_condition}
Let $R \subseteq A_1^* \times A_2^*$ be a relation and suppose $R^\$ $ is accepted
by a transducer with the property that for every state $q$, one of the following (mutually
exclusive) conditions holds:
\begin{itemize}
\item[(i)] $q$ has an edge leaving it, and every edge leaving $q$ has the form $(a, \epsilon)$ for some $a \in A_1^\$ $,
      and there is at most one such edge for each $a \in A_1^\$ $;
\item[(ii)] $q$ has an edge leaving it, and every edge leaving $q$ has the form $(\epsilon, a)$ for some $a \in A_2^\$ $,
      and there is at most one such edge for each $a \in A_2^\$ $;
\item[(iii)] there are no edges leaving $q$;
\item[(iv)] there is exactly one edge leaving $q$, and that edge has label
      $(\epsilon, \epsilon)$;
\end{itemize}
Then $R$ is accepted by a deterministic 2-tape automaton.
\end{proposition}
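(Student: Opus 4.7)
The plan is to transform the given transducer directly into a deterministic 2-tape automaton, exploiting the fact that the four conditions are crafted so that the transducer is already essentially in the required form: states of type (i) read one symbol from tape~$1$ with at most one transition per symbol, behaving exactly like $Q_1$-states of such an automaton; type (ii) states behave dually with tape~$2$; type (iii) states are dead ends; and only type (iv) states, which carry spontaneous $(\epsilon,\epsilon)$-transitions, are genuinely incompatible with the deterministic 2-tape model and must be eliminated.

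The key preprocessing step is to compute an $\epsilon$-closure map $\sigma$ collapsing chains of type (iv) states. Because each such state has a unique outgoing edge, following $(\epsilon,\epsilon)$-edges from any state $q$ yields a uniquely determined sequence which, by finiteness, either reaches a first state $\sigma(q)$ of type (i), (ii) or (iii), or enters a cycle consisting entirely of type (iv) states. In the cyclic case I set $\sigma(q)$ to be a new dead state, marked terminal precisely when the reachable cycle contains an original terminal state; in the acyclic case $\sigma(q)$ is the first non-type-(iv) state reached, with the terminal set augmented to include $\sigma(q)$ whenever any state skipped past was terminal. I then build the deterministic 2-tape automaton on the set of non-type-(iv) states (together with the new dead state), partitioning type (i) into $Q_1$ and type (ii) into $Q_2$ and placing the others arbitrarily. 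For each edge $q \to q'$ of the original transducer with $q$ of type (i) and label $(a, \epsilon)$, I set $\delta_1(q, a) = \sigma(q')$, and analogously for type (ii) and $\delta_2$; the uniqueness clauses in (i) and (ii) guarantee these are well-defined partial functions, and the initial state is taken to be $\sigma(q_0)$.

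What remains is the routine verification that $(u\$, v\$, \sigma(q_0)) \mapsto^* (\epsilon, \epsilon, q_{\mathrm{term}})$ in the new automaton for some terminal $q_{\mathrm{term}}$ if and only if $(u\$, v\$)$ labels an accepting path in the original transducer. This follows by an easy induction, using the observation that any maximal $(\epsilon,\epsilon)$-subpath of a transducer run consists entirely of type (iv) states, since no other state type admits an outgoing $(\epsilon,\epsilon)$-edge. The main technical care lies in the bookkeeping around type (iv) cycles: since no input is consumed on such a cycle, acceptance from within the cycle requires that it contain a terminal state, which is exactly how the construction propagates terminality through $\sigma$. Apart from this point the argument amounts to a straightforward translation between the two essentially equivalent formalisms.
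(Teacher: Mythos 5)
Your overall strategy --- collapsing the chains of type (iv) states via an $\epsilon$-closure map and reading the remaining states directly as a deterministic 2-tape automaton --- is essentially the same construction the paper uses. However, your bookkeeping for terminal states is wrong: you propagate terminality \emph{forward} along the $(\epsilon,\epsilon)$-chains, marking $\sigma(q)$ terminal whenever some state skipped between $q$ and $\sigma(q)$ was terminal. Since $\sigma$ need not be injective, this conflates information belonging to different predecessors of the same closure state. Concretely, take $A_1=A_2=\{x\}$ and a transducer with states $q_0,p_1,p_2,r_1,q',q'',s$, initial state $q_0$, sole terminal state $q'$, and edges
$q_0\xrightarrow{(x,\epsilon)}p_1$, $q_0\xrightarrow{(\$,\epsilon)}p_2$, $p_1\xrightarrow{(\$,\epsilon)}r_1$, $r_1\xrightarrow{(\epsilon,\$)}q'$, $p_2\xrightarrow{(\epsilon,\$)}q''$, $q'\xrightarrow{(\epsilon,\epsilon)}s$, $q''\xrightarrow{(\epsilon,\epsilon)}s$.
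Every state satisfies one of (i)--(iv), and the accepted relation is $\lbrace (x\$,\$)\rbrace = R^\$$ with $R=\lbrace (x,\epsilon)\rbrace$. In your automaton $\sigma(q')=\sigma(q'')=s$, and $s$ gets marked terminal because the chain from $q'$ passes the terminal state $q'$; but then the run on $(\$,\$)$, which goes $q_0\to p_2\to s$, is accepted, so your automaton wrongly places $(\epsilon,\epsilon)$ in $R$. The same conflation affects your treatment of type (iv) cycles, where you test only the cycle itself for terminal states and ignore the (possibly differing) tails leading into it.

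The fix is to attach the terminality information to the \emph{source} of the closure rather than its target, which is what the paper does: it retains the full state set, declares a state of the new automaton terminal iff the transducer has an $(\epsilon,\epsilon)$-labelled path from it to an original terminal state, and applies the closure when \emph{leaving} a state, setting $\delta_1(p,a)=q$ whenever there is an edge labelled $(a,\epsilon)$ from $\ol{p}$ (the unique type (i) or (ii) state $\epsilon$-reachable from $p$, when such exists) to $q$. With that amendment the rest of your construction and your correctness sketch are sound, and the argument is then essentially identical to the paper's.
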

\begin{proof}
Let $M$ be the transducer accepting $R^\$ $ with the given property, and let
$Q$ be the state set of $M$.
 Notice that for each state $q$, there is at most one state, which
we call $\ol{q}$, with the property that there is a path from $q$
to $\ol{q}$ labelled $(\epsilon, \epsilon)$ and $\ol{q}$ satisfies condition
(i) or (ii) in the statement of the proposition. Since (i) and (ii) are
mutually exclusive, we may choose a partition $Q = Q_1 \cup Q_2$ of $Q$
into disjoint subsets such that for every $q \in Q$ with $\ol{q}$ defined
we have that 
$\ol{q}$ satisfies condition (i) if and only if $q \in Q_1$, and similarly
$\ol{q}$ satisfies condition (ii) if and only if $q \in Q_2$. (States $q$
for which $\ol{q}$ is not defined may be assigned arbitrarily to either $Q_1$
or $Q_2$).

We now define a new deterministic 2-tape automaton $N$ as follows. The two
tape alphabets of $N$ are $A_1$ and $A_2$. The state
set of $N$ is the state set $Q$ of $M$ partitioned into the subsets $Q_1$ and
$Q_2$ constructed above. The initial state of $N$ is the
initial state of $M$. The terminal states of $N$ consist of all states
$p \in Q$ such that $M$ has a path from $p$ to a terminal state with label
$(\epsilon, \epsilon)$.
For each $a \in A_1^\$ $, $p \in Q_1$ and $q \in Q$ we set $\delta_1(p, a) = q$
if and only if $\ol{p}$ is defined and $M$ has an edge from $\ol{p}$ to $q$ with label $(a, \epsilon)$.
Similarly, for each $a \in A_2^\$ $, $p \in Q_2$ and $q \in Q$ we set
$\delta_2(p, a) = q$
if and only if $\ol{p}$ is defined and $M$ has an edge from $\ol{p}$ to $q$ with label $(\epsilon, a)$.
It follows directly from the criteria on the automata that each $\delta_i$ is
a well-defined partial function from $Q_i \times A_i^\$ $ to $Q$.

It is now a routine matter to verify that the deterministic 2-tape automaton
$N$ accepts a pair $(u,v)$ if and only if $M$ accepts $(u \$, v \$)$.
\end{proof}

\section{Prefix-Rewriting Automata}\label{sec_prefmod}

In this section, we study a type of automaton called a
\textit{2-tape prefix-rewriting automaton}. We show that any relation
accepted by a [deterministic] 2-tape prefix-rewriting automaton
with a certain property called \textit{bounded expansion} is a
[deterministic]
rational relation. In Section~\ref{sec_smalloverlap} we shall apply this
result to show that the word problem for a $C(4)$ monoid presentation is
a deterministic rational relation.

Let $k \in \N$ and $A_1$ and $A_2$ be finite alphabets. A \textit{$k$-prefix-rewriting automaton
from $A_1$ to $A_2$} is a finite directed graph with edges labelled by elements of
$$\left( (A_1^{\leq k} \times A_1^{\leq k}) \cup (A_1^{< k} \$ \times A_1^{< k} \$) \right) \times \left( (A_2^{\leq k} \times A_2^{\leq k}) \cup (A_2^{< k} \$ \times A_2^{< k} \$) \right),$$
together with a distinguished initial vertex and a set of
distinguished terminal vertices. Given such an automaton with vertex set $Q$,
we define a binary relation $\to$ on $A_1^* \$ \times A_2^* \$ \times Q$ by
$$(u_1 \$, v_1 \$, q_1) \to (u_2 \$, v_2 \$, q_2)$$
if and only if there exist words $x_1$, $x_2$, $y_1$, $y_2$, $u'$ and $v'$
in the appropriate alphabets such that
$$u_1 = x_1 u', \ u_2 = x_2 u', \ v_1 = y_1 v', \ v_2 = y_2 v'$$
and $(x_1, x_2, y_1, y_2)$ labels an edge from $q_1$ to $q_2$. If this
holds we say that \textit{the edge $e$ is applicable in the configuration
$(u_1 \$, v_1 \$, q_1)$}.
We call the automaton \textit{deterministic} if in each configuration
$(u,v,q) \in A_1^* \$ \times A_2^* \$ \times Q$ there is at most one
edge applicable.

Let $\to^*$ denote the reflexive, transitive closure of the relation $\to$.
We say that a pair $(u,v) \in A_1^* \times A_2^*$ is \textit{accepted} by the 
automaton if there exists a terminal state $q_1$
such that
$$(u \$, v \$, q_0) \to^* (\$, \$, q_1)$$
where $q_0$ is the initial state. As usual, 
the \textit{relation accepted} by the automaton is the set of all pairs in
$A_1^* \times A_2^*$ which are accepted
by the automaton.

Intuitively, a 2-tape prefix-rewriting automaton is very similar to a
2-pushdown automaton; the only essential difference is that we allow both
stacks to be initialised with non-empty words, and view the automaton as
accepting pairs of words and defining a relation instead of a language.
As one might expect, such automata are extremely powerful, being easily seen
to accept in particular any relation of the form $L \times \lbrace \epsilon \rbrace$
where $L$ is a recursively enumerable
language. However, we shall
be interested in a more restricted class of such automata.
We say that a prefix-rewriting automaton has \textit{bounded expansion} if
there exists a constant $b \in \N$ such that whenever
$$(u_1, v_1, q_1) \to^* (u_2, v_2, q_2)$$ we have
$|u_2| \leq |u_1| + b$ and $|v_2| \leq |v_2| + b$. We call such a value of
$b$ an \textit{expansion bound} for the automaton.

Note that the bounded expansion condition places a requirement on the
contents of each store independently. This contrasts with the
\textit{shrinking} and \textit{length-reducing} conditions on 2-pushdown
automata, used to describe growing context-sensitive and Church-Rosser languages
\cite{Buntrock98}, where a restriction is applied to the total size
of the 2 stores considered together. It transpires that our condition
is a very strong one, in that a relation accepted by a prefix-rewriting
automaton with bounded expansion is necessarily rational.

\begin{theorem}\label{thm_bpm_imp_rational}
Any relation accepted by a [deterministic] 2-tape prefix-rewriting
automaton with bounded expansion is a [deterministic] rational
transduction. Moreover, given a [deterministic] 2-tape prefix-rewriting
automaton and an expansion bound for it, one can effectively construct a
[deterministic] transducer recognising the same relation.
\end{theorem}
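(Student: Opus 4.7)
The plan is to construct (effectively) a transducer $T$ which simulates $M$ as it scans $(u\$, v\$)$ from left to right. The states of $T$ will be triples $(q, \alpha_1, \alpha_2)$, where $q$ is a state of $M$ and each $\alpha_i$ is a bounded-length word representing the rewritten prefix of $M$'s simulated tape $i$; the remainder of that tape is implicit, being the yet-unread suffix of $u_i\$$. Transitions come in three flavours: (i) read a character from input tape $i$ and append it to $\alpha_i$ (shrinking the intact suffix by one); (ii) a silent $(\epsilon,\epsilon)$-labelled step simulating an edge of $M$ whose left-hand sides lie entirely within the current buffers, updating them per the rewrite rule; and (iii) an accept move when $M$'s simulated state is accepting, the input is fully consumed, and $\alpha_i = \$$ on each tape. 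That every accepting run of $T$ reconstructs an accepting $M$-computation is immediate from the design.

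The main technical point is to show the buffers can be kept bounded, so that $T$ is genuinely finite-state. For each tape $i$ track the decomposition $w_i = \alpha_i s_i$ in which $s_i$ is the still-intact suffix of $u_i\$$ that has not been eaten by any rewrite; with this choice $|s_i|$ is monotonically non-increasing along any $M$-computation. Partition the computation into consecutive \emph{phases} separated by \emph{consumption events}, namely rewrites $(x_1, x_2)$ with $|x_1| > |\alpha_i|$ which remove $|x_1| - |\alpha_i|$ characters from $s_i$. Immediately after a consumption event (or at time zero), $|\alpha_i| \leq k$, because the new $\alpha_i$ is exactly the right-hand side $x_2$. Within the following phase $|s_i|$ is constant, so $|\alpha_i|$ and $|w_i|$ change in lockstep; bounded expansion applied between the start $t_p$ of the phase and any later time $t$ in it yields $|\alpha_i^t| \leq |\alpha_i^{t_p}| + b \leq k + b$. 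Hence $B := k + b$ is a universal buffer bound, and $T$ may be taken with state set $Q \times (A_1^\$)^{\leq B} \times (A_2^\$)^{\leq B}$. This combinatorial lemma is the crux of the proof; everything else is bookkeeping.

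For the deterministic case, the transitions of $T$ should be arranged so that at each state the outgoing edges fit exactly one of the four mutually exclusive patterns required by Proposition~\ref{prop_dettransducer_condition}: reads on tape $1$ only (at most one per input symbol), reads on tape $2$ only, a unique $(\epsilon,\epsilon)$-labelled silent rewrite, or no outgoing edges at all. Determinism of $M$ (at most one applicable edge per configuration) ensures this assignment is consistent and forces an unambiguous simulation. Applying Proposition~\ref{prop_dettransducer_condition} then converts $T$ into the required deterministic 2-tape automaton. Effectiveness is clear throughout: given $M$ and the expansion bound $b$, the constant $B$, the state set and edges of $T$, and the final 2-tape automaton are all computable.
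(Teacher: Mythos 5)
Your proposal is correct and follows essentially the same route as the paper: a transducer whose states augment those of $M$ with two buffers of length at most $k+b$, with input-reading, silent $(\epsilon,\epsilon)$ rewrite-simulating, and accepting transitions, and with the deterministic case handled by arranging the out-edges to meet the conditions of Proposition~\ref{prop_dettransducer_condition}. The only real difference is in how the buffer bound $k+b$ is justified: you partition the run into phases during which the untouched suffix is constant and apply the global expansion bound $b$ once per phase (starting from a prefix of length at most $k$), whereas the paper proves an inductive invariant $|s| \leq k+b-c$ in which a per-configuration expansion bound $c$ shrinks as the buffer grows --- two packagings of the same observation.
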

\begin{proof}
Let $M$ be a $2$-tape $k$-prefix-rewriting automaton with bounded expansion accepting
a relation $R \subseteq A_1^* \times A_2^*$, and let $b \in \N$ be an
expansion bound for $M$. We construct from $M$ a finite transducer $N$ which simulates $M$
and so accepts $R^\$ $. Intuitively, the new transducer will read $u$ and
$v$, buffering at least the first $k$ characters of each in the finite state
control. Prefix-modification can thus be simulated by modifying only the
contents of the finite state control. Since a prefix-rewriting automaton
can replace a prefix with a longer one, it may be necessary to store more
than $k$ characters of each word in the finite state control, but the
expansion bound serves to ensure that a buffer of some fixed size (namely $k+b$) will always
suffice.

Formally, for $i = 1,2$ we let $C_i = A_i^{\leq k+b} \cup A_i^{< k+b} \$ $
and let $B_i$ be the set of all words $x \in C_i$ such that either
$|x| \geq k$ or the final letter of $x$ is $\$ $. (Intuitively, $C_i$ will be
the set of all possible states for the buffer on tape $i$, while $B_i$
will be the set of ``adequately populated'' buffer states in which it is not
immediately necessary to read any more of the input word.)

We construct a transducer $N$ as follows. The state set of $N$ is
$C_1 \times C_2 \times Q$ where $Q$ is the state set of $M$. The initial state is $(\epsilon, \epsilon, q_0)$
where $q_0$ is the initial state of $M$. The terminal states are those of
the form $(\$, \$, q)$ with $q$ a terminal state of $M$. The edges are
as follows:
\begin{itemize}
\item[(1)] for every $x \in C_1$, $y \in C_2$ with $x \notin B_1$, every
$a \in A_1^\$ $ such
that $xa \in C_1$ and every
state $q$, there is an edge from $(x,y,q)$ to $(xa,y,q)$ with label $(a, \epsilon)$;
\item[(2)] for every $x \in C_1$, $y \in C_2$ with $x \in B_1$ but $y \notin B_2$,
every $a \in A_2^\$ $ such that $ya \in C_2$ and every state $q$, there is an edge from
$(x,y,q)$ to $(x,ya,q)$ with label $(\epsilon, a)$;
\item[(3)] for each edge in $M$ from $p$ to $q$ with label
$(u_1, u_2, v_1, v_2)$ and each $x', y'$ such that $u_1 x' \in B_1$
and $v_1 y' \in B_2$, there is an edge
from $(u_1 x', v_1 y', p)$ to $(u_2 x', v_2 y', q)$ with label $(\epsilon, \epsilon)$
provided $u_2 x' \in C_1$ and $v_2 u' \in C_2$.
\end{itemize}
Edges of types (1) and (2) serve simply to read the input words into
the buffers until each contains sufficient data (at least $k$
letters or the entire of the input if this is less), while edges of type
(3) simulate the transitions of the prefix-rewriting
automaton $M$ by operating only on the buffers.

Notice that once the transducer reaches a state in
$A_1^{<k+b} \$ \times C_2 \times Q$ (that is, one where the first buffer
content contains the symbol $\$ $), it will always remain in such a state,
and will never again read from the first input word.
Similarly, once it reaches a state in $C_1 \times A_2^{<k+b} \$ \times Q$
it will always remain in such a state and will never again read from the
second input word. Noting also that all the terminal states lie in both of
these sets, it follows that all pairs accepted by the transducer lie in
$A_1^* \$ \times A_2^* \$ $.

We say that a configuration $(u_1,v_1,q_1)$ has expansion bound $(c, d) \in \mathbb{N} \times \mathbb{N}$
if whenever $(u_1, v_1, q_1) \to^* (u_2, v_2, q_2)$ we have $|u_2| \leq |u_1| + c$ and
$|v_2| \leq |u_1| + d$. Note that the expansion bound condition on the automaton
means that $(b,b)$ is an expansion bound for every configuration. We shall
need the following lemma.

\begin{lemma}\label{lemma_prefmodimptransducer}
Suppose $(u_1,v_1,q_1) \to^* (u_2,v_2,q_2)$ in the prefix-rewriting
automaton $M$. Suppose further than $(u_1, v_1, q_1)$ has expansion bound
$(c_1, d_1)$ and that
$u_1 = s_1 s_1'$, $v_1 = t_1 t_1'$ where
$|s_1| \leq k+b-c_1$ and $|t_1| \leq k+b-d_1$.
Then there exist factorisations $u_2 = s_2 s_2'$
and $v_2 = t_2 t_2'$ and an expansion bound $(c_2, d_2)$ for
$(u_2, v_2, q_2)$ such that $|s_2| \leq k+b-c_2$, $|t_2| \leq k+b-d_2$
and the transducer $N$ has a path from $(s_1, t_1, q_1)$ to $(s_2, t_2, q_2)$
with label $(g, h)$ where $s_1' = g s_2'$ and $t_1' = h t_2'$.
\end{lemma}
\begin{proof}
We use induction on the number of steps in the transition sequence from
from $(u_1, v_1, q_1)$ to $(u_2, v_2, q_2)$. Certainly if
$(u_1, v_1, q_1) = (u_2, v_2, q_2)$ it suffices to take
$s_2 = s_1$, $s_2' = s_1'$, $t_2 = t_1$, $t_2' = t_1'$,
$c_2 = c_1$, $d_2 = d_1$ and $g = h = \epsilon$.

Next we consider one-step case, that is, the case in which
$(u_1,v_1,q_1) \to (u_2,v_2,q_2)$.
Let $g$ be the shortest prefix of $s_1'$ such that
$s_1 g \in B_1$; similarly, let $h$ be the shortest prefix of $t_1'$
such that $t_1 h \in B_2$. It follows easily from the definition that our
transducer $N$ has a path from $(s_1, t_1, q_1)$ to $(s_1 g, t_1 h, q_1)$
with label $(g, h)$.

Now since $(u_1, v_1, q_1) \to (u_2, v_2, q_2)$,
by definition there exist words $x_1$, $x_2$, $y_1$, $y_2$, $u'$ and $v'$
such that
$u_1 = x_1 u'$, $u_2 = x_2 u'$, $v_1 = y_1 v'$, $v_2 = y_2 v'$ and
$(x_1, x_2, y_1, y_2)$ labels an edge from $q_1$ to $q_2$. Since
$|x_1|, |y_1| \leq k$
we have that $x_1$ and $y_1$ are prefixes of $s_1 g$ and $t_1 h$
respectively, say $s_1 g = x_1 x'$ and $t_1 h = y_1 y'$. But now
by the definition of our transducer, there is an edge
from $(s_1 g = x_1 x', t_1 h = y_1 y', q_1)$ to
$(x_2 x', y_2 y', q_2)$ with label $(\epsilon, \epsilon)$. Thus, setting
$s_2 = x_2 x'$ and $t_2 = y_2 y'$ and defining $s_2'$ and $t_2'$ accordingly,
we obtain a path from $(s_1, t_1, q_1)$ to $(s_2, t_2, q_2)$
with label $(g,h)$.

Now we have
$$x_2 x' s_2' = s_2 s_2' = u_2 = x_2 u'$$
so cancelling on the left we obtain $u' = x' s_2'$. But now
$$s_1 s_1' = u_1 = x_1 u' = x_1 x' s_2' = s_1 g s_2'$$
so cancelling again yields $s_1' = g s_2'$ as claimed. An entirely
similar argument shows that $t_1' = h t_2'$.

Next, notice that we have $|u_1| - |u_2| = |s_1| - |s_2|$ and similarly
$|v_1| - |v_2| = |s_1| - |s_2|$. Set $c_2 = c_1 + |s_1| - |s_2|$ and
$d_2 = d_1 + |t_1| - |t_2|$.
Clearly since any state derivable from $(u_2, v_2, q_2)$ is also derivable
from $(u_1, v_1, q_1)$, it is readily verified that $(c_2, d_2)$ is an
expansion bound for $(u_2, v_2, q_2)$. But now we have
$$|s_2| \ = \ |s_1| + c_1 - c_2 \ \leq \ (k + b - c_1) + c_1 - c_2 \ = \ k + b - c_2$$
and similarly $|t_2| \leq k + b - d_2$ as required to complete the proof of the
lemma in the one-step case.

The inductive argument for the general case is now straightforward.
\end{proof}

Now if $(u,v)$ is accepted by the prefix-rewriting automaton then
by definition we have $(u \$,v \$, q_0) \to^* (\$, \$, q_t)$ where
$q_0$ is the initial state and $q_t$ is some terminal state.
Since the automaton has
expansion bound $b$, the state $(u \$,v \$, q_0)$ has expansion bound $(b,b)$.
So taking $u_1 = u$, $v_1 = v$, $q_1 = q_0$, $q_2 = q_t$ $c_1 = d_1 = b$,
$s_1 = t_1 = \epsilon$, $s_1' = u$ and $s_2' = v$ and applying
Lemma~\ref{lemma_prefmodimptransducer}, our transducer
has a path from $(\epsilon, \epsilon, q_0)$ to 
$(s_2, t_2, q_t)$ with label $(g,h)$ where $s_2 s_2' = t_2 t_2' = \$ $,
$u = s_1' = g s_2'$ and $v = t_1' = h t_2'$.

Now either $s_2 = \epsilon$ and $s_2' = \$ $, or $s_2 = \$ $ and $s_2' = \epsilon$.
In the latter case we have $g = u \$ $. In the former case we have $g = u$
and there is clearly an edge from $(s_2, t_2, q_t)$ to
$(s_2 \$ = \$, t_2, q_t)$ labelled $(\$, \epsilon)$, so in either case there is
a path from $(\epsilon, \epsilon, q_0)$ to $(\$, t_2, q_t)$ with label
$(u \$, h)$. A similar argument deals with the case that $h = v$, showing that
in all cases there is a path from the start state
$(\epsilon, \epsilon, q_0)$ to the terminal state $(\$, \$, q_t)$ with
label $(u \$,v \$)$. Thus, the transducer $N$
accepts $(u \$,v \$)$ as required.

Conversely, suppose $(u \$, v \$)$ is accepted by our transducer. Then there
must be a path $\pi$ from $(\epsilon, \epsilon, q_0)$ to $(\$, \$, q_t)$ for some initial state
$q_0$ and terminal state $q_t$.  Now clearly $\pi$ admits a unique decomposition
of the form
$$\pi \ = \ \lambda_0 \rho_1 \lambda_1 \rho_2 \dots \rho_n \lambda_n$$
where each $\rho_i$ is a single edge of type (3) and each $\lambda_i$ is
a (possibly empty) path consisting entirely of edges of types (1) and (2).
Clearly each $\rho_i$ has label $(\epsilon, \epsilon)$. Suppose each
$\lambda_i$ has label $(u_i, v_i)$; then clearly
$u \$ = u_0 u_1 \dots u_n$ and $v \$ = v_0 v_1 \dots v_n$. Suppose that for
$0 \leq i \leq n$, after
traversing the initial segment of the path $\pi$ up to and including $\lambda_i$,
the automaton is in configuration $(x_i, y_i, q_i)$. Notice
that, since the paths $\lambda$ do not change the state component, $q_0$
is consistent with its use above, and in particular is an initial state
in the prefix-rewriting automaton $M$. Similarly, $q_n = q_t$ is a terminal
state of $M$.
Now for $0 \leq i \leq n$ define
$$c_i = x_i u_{i+1} u_{i+2} \dots u_n \text{ and } d_i = y_i v_{i+1} v_{i+2} \dots v_n.$$
Clearly we have that $x_0 = u_0$ and $y_0 = v_0$, from which it follows that
$c_0 = u \$ $ and $d_0 = v \$ $. We also have $x_n = y_n = \$ $ so that
$c_n = d_n = \$ $.

Now it is straightforward to see that for $1 \leq i \leq n$ we have
$$(c_{i-1}, d_{i-1}, q_{i-1}) \to (c_i, d_i, q_i)$$
so that
$$(u \$, v \$,q_0) = (c_0, d_0, q_0) \ \to^* \ (c_n, d_n, q_n) = (\$, \$, q_t).$$
which by definition means that $(u,v)$ is accepted by the 2-tape
prefix-rewriting automaton $M$.
This completes the proof that the transducer $N$ accepts the relation
$R^\$ $. It is easy to show that for any relation $T$, $T$
is a rational relation if and only if $T^\$ $ is a rational relation,
so this suffices to prove that $R$ is a rational relation.

Finally, suppose that the original prefix-rewriting automaton $M$ is
deterministic. We claim that the transducer $N$ which we have constructed
to accept $R^\$ $ satisfies the conditions of Proposition~\ref{prop_dettransducer_condition}, from which
it will follow that $R$ is a deterministic rational relation, as required.

To this end, consider a state $(x,y,q)$ in $N$. If $x \notin B_1$ then it
follows immediately from the definition that all out-edges have labels of
the form $(a, \epsilon)$ with $a \in A_1$ and that there is exactly one such
for each $a \in A$, so that condition (i) holds. Similarly, if $x \in B_1$
but $y \notin B_2$ then all out-edges have labels of the form $(\epsilon, a)$
and there is exactly one such for each $a \in A_2$ so condition (ii) holds.

Finally, suppose $x \in B_1$ and $y \in B_2$. From the definition of $N$,
any edge leaving $(x,y,p)$ must have label $(\epsilon, \epsilon)$. If there
were more than one such edge, then each would correspond to a different possible
transition in $M$ from the state $(x,y,p)$; but by the determinism assumption
on $M$ there can only be one such transition, so this would give a
contradiction. Thus we deduce that there is at most one such edge, so that
either condition (iii) or condition (iv) holds. This completes the proof.
\end{proof}
We emphasise that Theorem~\ref{thm_bpm_imp_rational} does \textit{not} give
a means to effectively construct a transducer for a relation $R$ starting
only from a 2-tape prefix-rewriting automaton with bounded expansion which
accept $R$. The construction in the proof makes explicit use of the
expansion bound for the prefix-rewriting automaton, and it is not clear
that one can effectively compute an expansion bound from the automaton,
even given the knowledge that such a bound exists.

\section{Automata for the Word Problem in Small Overlap Monoids}\label{sec_smalloverlap}

The aim of this section is to show that the word problem for any $C(4)$
monoid must be a deterministic rational relation.
Throughout this section, we fix a monoid presentation $\langle A \mid R \rangle$
satisfying the condition $C(4)$.

In \cite{K_smallover1} we presented an efficient recursive algorithm which can
be used to solve the word problem for such a presentation. For ease of reference
the algorithm is reproduced in Figure~1.
\begin{figure}
\begin{codebox}
\Procname{$\proc{WP-Prefix}(u, v, p)$}
\li     \If $u = \epsilon$ or $v = \epsilon$ \label{li_start_a}
\li         \Then \If $u = \epsilon$ and $v = \epsilon$ and $p = \epsilon$
\li             \Then \Return \const{Yes}                \label{li_allepsilon}
\li             \Else \Return \const{No}                 \label{li_someepsilon}
            \End \label{li_end_a}
\li     \ElseIf $u$ does not have the form $XYu'$ with $XY$ a clean overlap prefix
\li     \Then \If $u$ and $v$ begin with different letters \label{li_start_b}
\li         \Then \Return \const{No}                     \label{li_uvdifferentstart}
\li        \ElseIf $p \neq \epsilon$ and $u$ and $p$ begin with
different letters
\li         \Then \Return \const{No}                     \label{li_updifferentstart}
\li         \ElseNoIf
\li       $u \gets u$ with first letter deleted
\li      $v \gets v$ with first letter deleted
\li      \If $p \neq \epsilon$
\li          \Then $p \gets p$ with first letter deleted
         \End
\li      \Return $\proc{WP-Prefix}(u,v,p)$   \label{li_rec_nomop}
\End \label{li_end_b}

\li \ElseNoIf
\li $\kw{let}\  X, Y, u'$ be such that $u = XY u'$ \label{li_start_c}

\li \If $p$ is a prefix of neither $X$ nor $\ol{X}$
\li \Then \Return \const{No} \label{li_pnotprefix}

\li \ElseIf $v$ does not begin either with $XY$ or with $\ol{XY}$
\li \Then \Return \const{No} \label{li_vstartswrong}

\li \ElseIf $u = XYZ u''$ and $v = XYZ v''$
\li    \Then \If $u''$ is $Z$-active
\li       \Then \Return $\proc{WP-Prefix}(Z u'', Z v'', \epsilon)$ \label{li_rec_case1a}
\li       \Else \Return $\proc{WP-Prefix}(\ol{Z} u'', \ol{Z} v'', \epsilon)$ \label{li_rec_case1b}
       \End

\li \ElseIf $u = XY u'$ and $v = XY v'$
\li     \Then \If $p$ is a prefix of $X$
\li         \Then \Return $\proc{WP-Prefix}(u',v', \epsilon)$ \label{li_rec_case2a}
\li         \Else \Return $\proc{WP-Prefix}(u',v', Z)$ \label{li_rec_case2b}
        \End

\li \ElseIf $u = XYZ u''$ and $v = \ol{XYZ} v''$
\li     \Then \If $u''$ is $Z$-active
\li         \Then \Return $\proc{WP-Prefix}(Z u'', Z v'', \epsilon)$ \label{li_rec_case3a}
\li         \Else \Return $\proc{WP-Prefix}(\ol{Z} u'', \ol{Z} v'', \epsilon)$ \label{li_rec_case3b}
        \End

\li \ElseIf $u = XY u'$ and $v = \ol{XYZ} v''$
\li     \Then \Return $\proc{WP-Prefix}(u', Z v'', \epsilon)$ \label{li_rec_case4}

\li \ElseIf $u = XYZ u''$ and $v = \ol{XY} v'$
\li     \Then \Return $\proc{WP-Prefix}(\ol{Z} u'', v', \epsilon)$ \label{li_rec_case5}

\li \ElseIf $u = XY u'$ and $v = \ol{XY} v'$
\li     \Then \kw{let} $z$ be the maximal common suffix of $Z$ and $\ol{Z}$
\li           \kw{let} $z_1$ be such that $Z = z_1 z$
\li           \kw{let} $z_2$ be such that $\ol{Z} = z_2 z$
\li           \If $u'$ does not begin with $z_1$ or $v'$ does not begin with $z_2$;
\li               \Then \Return \const{NO} \label{li_case6no}
\li               \Else \kw{let} $u''$ be such that $u' := z_1 u''$
\li                     \kw{let} $v''$ be such that $v' := z_2 v''$;
\li                     \Return $\proc{WP-Prefix}(u'', v'', z)$ \label{li_rec_case6} \label{li_end_c}
              \End
        \End
    \End
\end{codebox}
\caption{Algorithm for the word problem of a $C(4)$ presentation}
\end{figure}
It
takes as input a piece of the presentation $p \in A^*$ and two words
$u, v \in A^*$ and outputs \textbf{YES} if $u \equiv v$ and $p$ is a
possible prefix of $u$ (and hence also of $v$). Otherwise it outputs
\textbf{NO}. In particular, if
$p = \epsilon$ then the algorithm outputs \textbf{YES} if $u \equiv v$ and
\textbf{NO} if $u \not\equiv v$, thus solving the word problem for the
presentation. See \cite[Lemma~5]{K_smallover1} and
\cite[Lemma~6]{K_smallover1} for proofs of correctness and termination
respectively.

The proof strategy for our main result is to show that this algorithm
can be implemented on a deterministic 2-tape prefix-rewriting automaton
with bounded expansion. The results of Section~\ref{sec_prefmod} then
allow us to conclude that the word problem is a deterministic rational relation.

\begin{theorem}\label{thm_main}
Let $\langle A \mid R \rangle$ be a finite monoid presentation satisfying
the small overlap condition $C(4)$. Then the relation
$$\lbrace (u, v) \in A^* \times A^* \mid u \equiv v \rbrace$$
is deterministic rational and reverse deterministic rational. Moreover,
one can, starting from the presentation, effectively compute 2-tape
deterministic automata recognising this relation and its reverse.
\end{theorem}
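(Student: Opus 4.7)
The strategy is to implement the algorithm of Figure~1 as a deterministic 2-tape prefix-rewriting automaton over $A$ with bounded expansion, and then invoke Theorem~\ref{thm_bpm_imp_rational} to extract a deterministic transducer for the word problem.

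Construction: take $k$ sufficiently large relative to the longest relation word (twice this length will suffice, so that $p$-activity checks can be decided from a bounded prefix). The state set of the automaton consists of one state per possible value of the piece parameter $p$ (the set of pieces of the presentation, together with $\epsilon$, is finite), plus an initial state and an accepting state. Each conditional branch in the algorithm inspects a bounded-length prefix of $u$ and of $v$ together with the state (which encodes $p$) and transitions to a state encoding the new $p'$ while prefix-rewriting $u$ and $v$ to the values $u', v'$ specified by that branch. Because each modified prefix is a concatenation of at most three words drawn from $\{X, Y, Z, \ol{X}, \ol{Y}, \ol{Z}\}$ for some relation word, its length is bounded by a presentation-dependent constant, and hence every edge has a label in $(A^{\le k} \times A^{\le k}) \cup (A^{<k}\$\times A^{<k}\$)$. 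Correctness --- that the automaton accepts $(u,v)$ iff $u \equiv v$ --- is immediate from \cite[Lemma~5]{K_smallover1}, and determinism follows from the pairwise mutual exclusivity of the conditional branches in the algorithm.

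The main technical obstacle is verifying bounded expansion. Inspection of the algorithm shows that the only moves capable of increasing $|u|$ (or, symmetrically, $|v|$) are cases 1b, 3b, and 5, in each of which a prefix $XYZ$ is replaced by $\ol{Z}$ with net length change bounded by $c := \max_{R}(|\ol{R}|-|R|)$. The crux is to bound the cumulative growth across an entire run of the algorithm: once such a replacement installs $\ol{Z}$ at the front of $u$, one must argue that a constant-length portion of that $\ol{Z}$ is consumed before another growing replacement becomes applicable. The $C(4)$ structure is essential here --- in particular, the fact that no middle word is itself a piece implies that any overlap prefix of the new word beginning inside the freshly-installed piece $\ol{Z}$ must extend past its end --- and leads to an effectively computable expansion bound $b$ in terms of the presentation.

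With bounded expansion in hand, Theorem~\ref{thm_bpm_imp_rational} yields the desired deterministic transducer, and the whole construction is effective from the presentation. For the reverse-deterministic statement, observe that the map $(u,v) \mapsto (u^R, v^R)$ carries the presentation $\langle A \mid R \rangle$ to its opposite $\langle A \mid \{(x^R, y^R) : (x,y) \in R\}\rangle$, under which pieces, maximal piece prefixes and suffixes (interchanging roles), middle words, and the $C(4)$ condition itself are all preserved. Applying the entire construction to the opposite presentation produces a deterministic 2-tape automaton accepting $\{(u^R, v^R) : u \equiv v\}$, which is precisely the assertion that the original word-problem relation is reverse deterministic rational.
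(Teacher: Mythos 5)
Your overall strategy is exactly the paper's: encode the piece parameter $p$ in the finite state set, realise each branch of \textsc{WP-Prefix} as a prefix-rewriting edge acting on bounded prefixes of $u$ and $v$ (with $k$ twice the maximal relation word length so that clean-overlap-prefix and $Z$-activity tests are local, as guaranteed by Lemma~7 of the predecessor paper), get determinism from the mutual exclusivity of the branches, and then invoke Theorem~\ref{thm_bpm_imp_rational}; the left-right symmetry argument for the reverse relation is also the paper's. So the construction, correctness and reversal parts are fine.

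The gap is in the bounded-expansion verification, which you rightly call the crux but do not actually close. As literally stated, your claim --- that after a growing replacement installs $\ol{Z}$, ``a constant-length portion of that $\ol{Z}$ is consumed before another growing replacement becomes applicable'' --- does not bound cumulative growth: if each growing step can add up to $c$ letters while only some smaller constant is guaranteed to be consumed in between, the length can drift upward without bound over a long run. What is needed (and what the paper proves) is an invariant, not a per-step estimate: after any number of transitions the first-tape content factors as $z_1 u_1'$ where $z_1$ is a proper suffix of a relation word (namely $\epsilon$, $Z$ or $\ol{Z}$) and $|u_1'|$ never exceeds the corresponding quantity at the previous stage. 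The observation you do make --- that $XY$ is not a piece (since $Y$ is not a piece) and hence cannot sit inside the installed suffix $z_0$ of a relation word --- is precisely the lemma that drives this: it forces $|XY| > |z_0|$, so the next C-type transition consumes \emph{all} of the installed material plus at least one further letter, and the ``remainder'' component strictly shrinks. With that invariant the expansion bound is simply $b = $ the maximal relation word length, uniformly and effectively. Your write-up gestures at the right local fact but never formulates the global invariant that turns it into a bound, so as it stands the key hypothesis of Theorem~\ref{thm_bpm_imp_rational} is asserted rather than proved.
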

\begin{proof}
Let $k$ be twice the maximum length of a relation word in the presentation.
We construct a deterministic 2-tape $k$-prefix-rewriting automaton recognising
the desired relation, and an expansion bound for this automaton.
By Theorem~\ref{thm_bpm_imp_rational}, this suffices to show that the
given relation is deterministic rational and that a 2-tape deterministic
automaton for it can be effectively constructed. Since the $C(4)$ condition
on the presentation is entirely left-right symmetric, the claim regarding
the reverse relation also follows.

Let $P$ be the set of all pieces of the presentation $\langle A \mid R \rangle$,
and let $+$ be a new symbol not in $P$. Recall that $\epsilon$
is by definition a piece of every presentation, so certainly $\epsilon \in P$.
Let $W = A^k \cup A^{< k} \$ $.
We define a 2-tape prefix-rewriting automaton with
\begin{itemize}
\item state set $P \cup \lbrace + \rbrace$;
\item initial state $\epsilon$,
\item unique terminal state $+$;
\end{itemize}
and edges defined as follows.
\begin{itemize}
\item[(A)] an edge from $\epsilon$ to $+$ labelled $(\$, \$, \$, \$)$.
\item[(B)] for every $u \in W$ with $u \neq \$ $ and such that $u$ has no clean
overlap prefix of the form $XY$, and every $v \in W$ such that $v \neq \$ $ and $u$ and $v$
begin with the same letter, a transition from
$p$ to $p'$ labelled $(u, u', v, v')$ where $u'$, $v'$ and $p'$ are
obtained from $u$, $v$ and $p$ respectively by deleting the first letter.
\end{itemize}

In addition for every $p \in P$ and $u,v \in W$ such that $u$ has a clean
overlap prefix (say $XY$) and $p$ is a prefix of either $X$ or $\ol{X}$ or
both, the automaton may have an edge from $p$ to another state in $P$ as
follows:
\begin{itemize}
\item[(C1)] If $u = XYZ u''$, $v = XYZv''$ and $u''$ is $Z$-active, the automaton
      has an edge from $p$ to $\epsilon$ labelled $(u, Zu'', v, Zv'')$.
\item[(C2)] If $u = XYZ u''$, $v = XYZv''$ and $u''$ is \textbf{not} $Z$-active,
      the automaton has an edge from $p$ to $\epsilon$ labelled $(u, \ol{Z}u'', v, \ol{Z} v'')$.
\item[(C3)] If $u = XY u'$, $v = XY v'$, $u$ and $v$ do not both have $XYZ$ as a 
      prefix, and $p$ is a prefix of $X$, the automaton has an edge from $p$ to $\epsilon$
      labelled $(u, u', v, v')$.
\item[(C4)] If $u = XY u'$, $v = XY v'$, $u$ and $v$ do not both have $XYZ$ as a 
      prefix, and $p$ is \textbf{not} a prefix of $X$, the automaton has an
       edge from $p$ to $Z$ with label $(u, u', v, v')$.
\item[(C5)] If $u = XYZ u''$, $v = \ol{XYZ}v''$ and $u''$ is $Z$-active, the
      automaton has an edge from
      $p$ to $\epsilon$ labelled $(u, Zu'', v, Zv'')$.
\item[(C6)] If $u = XYZ u''$, $v = \ol{XYZ}v''$ and $u''$ is \textbf{not} $Z$-active,
      the automaton has an edge from $p$ to $\epsilon$ labelled $(u, \ol{Z}u'', v, \ol{Z} v'')$.
\item[(C7)] If $u = XY u'$, $v = \ol{XYZ} v''$ and $u$ does not have $XYZ$ as a prefix,
      the automaton has an edge from $p$ to $\epsilon$ labelled $(u, u', v, Z v'')$.
\item[(C8)] If $u = XYZ u''$, $v = \ol{XY} u'$ and $v$ does not have $\ol{XYZ}$ as
      a prefix, the automaton has an edge from $p$ to $\epsilon$ labelled $(u, \ol{Z} u'', v, v')$.
\item[(C9)] If $u = XY u'$, $v = \ol{XY} v'$, $u$ does not begin with $XYZ$, $v$ does
      not begin with $\ol{XYZ}$, $z$ is the maximum common suffix of $Z$ and
      $\ol{Z}$, $Z = z_1 z$, $\ol{Z} = z_2 z$, $u' = z_1 u''$, $v' = z_2 v''$,
      the automaton has an edge from $p$ to $z$ labelled $(u, u'', v, v'')$.
\end{itemize}
First, notice that this automaton is deterministic. Indeed, all edges
leaving a given vertex $p \in P$ have labels of the form $(u, x, v, y)$ with
$u, v \in W$. Notice that no member of the set $W$ is a prefix of another;
it follows that no word has two distinct words in $W$ as prefixes, which
means that the choice of prefixes $u$ and $v$ to act on is uniquely determined
by the configuation in which the action is to be applied.
Now it can be verified by examination that the various conditions on $u$, $v$
and $p$ which result in the inclusion of an edge from $p$ with label of the
form $(u, x, v, y)$ are mutually exclusive, so that there is at most one such
edge, and hence at most one transition applicable in any given configuration.

It is now an entirely routine matter to prove by induction
that for every piece $p \in A^*$ and words $u, v \in A^*$ we have
$$(u \$, v \$, p) \ \to^* \ (\$, \$, +)$$
if and only if the algorithm outputs \textbf{YES}, that is, if and only if
$u \equiv v$ and $p$ is a possible prefix of $u$. Transitions of types
B, C1, C2, C3, C4, C5, C6, C7, C8 and C9 correspond to the recursive calls at
lines 15, 24, 25, 28, 29, 32, 33, 35, 37, 46 respectively, while
transition of type A corresponds to termination with the answer \textbf{YES}
at line 3 of the algorithm. The conditions under which the algorithm terminates
with the answer \textbf{NO} (at lines 4, 7, 9, 19, 21 and 43) all correspond
to non-terminal configurations of the automaton in which no transitions are
applicable. It follows from \cite[Lemma~7]{K_smallover1} that the tests for
clean overlap prefixes and $Z$-activity on the buffer contents are equivalent
to performing the corresponding tests on the whole of the remaining input,
as demanded by the algorithm.

In particular, we have
$$(u \$, v \$, \epsilon) \ \to^* \ (\$, \$, +)$$ if and only if
$u \equiv v$, as required to show that our prefix-rewriting automaton
solves the word problem.
It remains only to find an expansion bound for the automaton. Let
$b$ be the length of the longest relation word in the presentation
$\langle A \mid R \rangle$.

Suppose $(u_0, v_0, q_0) \to^* (u_1, v_1, q_1)$ and suppose that
$u_0 = z_0 u_0'$
and $v_0 = z_0 v_0'$ where $z_0$ is either a proper suffix of a relation word
or the empty word. We
claim that there are factorisations
$u_1 = z_1 u_1'$ and $v_1 = z_1 v_1'$ where $z_1$ is a proper suffix of
relation word or the empty word, $|u_1'| \leq |u_0'|$ and $|v_1'| \leq |v_0'|$. 

We consider first the one-step case, that is, where $(u_0, v_0, q_0) \to (u_1, v_1, q_1)$.
If the transition from $(u_0, v_0, q_0)$ to $(u_1, v_1, q_1)$ is of type A or B then the claim is clear, so suppose the
transition is of type C1-C9. Then from the definitions of these transitions,
we must have $u_0 = XY u'$ for some maximum piece prefix $X$ and middle word
$Y$ of a relation word $XYZ$. Now $XY$ cannot be
a piece, so it cannot be a prefix of $z_0$, which is a proper suffix of a
relation word. Thus, we must have $|XY| > |z_0|$ and hence
$|u'| < |u_0'|$. Looking again at the definitions of the transitions, we
see that $u_1$ and $v_1$ either
\begin{itemize}
\item[(i)] are (not necessarily proper) suffixes of $u'$ and $v'$ respectively;
or
\item[(ii)] have the form $u_1 = Z u''$ and $v_1 = Z v''$ where $u''$
and $v''$ are (not necessarily proper) suffixes of $u'$ and $v'$ respectively; or
\item[(iii)] have the form $u_1 = \ol{Z} u''$ and $v_1 = \ol{Z} v''$ where $u''$
and $v''$ are (not necessarily proper) suffixes of $u'$ and $v'$ respectively.
\end{itemize}
In case (i) it suffices to set $z_1 = \epsilon$ and $u_1' = u_1$.
In case (ii) [respectively, case (iii)] it suffices to set
$z_1 = Z$ [respectively, $z_1 = \ol{Z}$] and $u_1' = u''$, noting that
$Z$ [respectively, $\ol{Z}$] must be a proper suffix of a relation word since
is a maximal piece suffix of $XYZ$ [$\ol{XYZ}$] and no relation word can
be a piece.

It now follows easily by induction that the claim also holds when
$$(u_0, v_0, q_0) \to^* (u_1, v_1, q_1).$$
In particular, taking $z_0 = \epsilon$ and $u_0' = u_0$
and then writing $u_1 = z_1 u_1'$ as above we have
$$|u_1| = |z_1| + |u_1'| \ \leq \ |z_1| + |u_0'| \ = \ |z_1| + |u_0| \ \leq \ |u_0| + b$$
and similarly $|v_1| \leq |v_0| + b$, as required to show that the
automaton has expansion bound $b$.
\end{proof}

As an immediate corollary we obtain a corresponding statement for
semigroups.
\begin{corollary}\label{cor_mainsemi}
Let $\langle A \mid R \rangle$ be a finite semigroup presentation satisfying
the small overlap condition $C(4)$. Then the relation
$$\lbrace (u, v) \in A^+ \times A^+ \mid u \equiv v \rbrace$$
is deterministic rational and reverse deterministic rational. Moreover,
one can, starting from the presentation, effectively compute 2-tape
deterministic automata recognising this relation and its reverse.
\end{corollary}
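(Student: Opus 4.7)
The plan is to reduce the semigroup statement directly to Theorem~\ref{thm_main} by reinterpretation and a product construction. Given the $C(4)$ semigroup presentation $\langle A \mid R \rangle$, I would first observe that since all relation words lie in $A^+$, the same data $\langle A \mid R \rangle$ can be read as a monoid presentation. The set of pieces and the definition of $C(m)$ depend only on the relation words, so the monoid presentation still satisfies $C(4)$. Moreover, the definition of one-step equivalence given in Section~\ref{sec_background} is identical for the semigroup and the monoid: indeed, the parenthetical in the definitions section explicitly says that the contexts $a,b$ are always allowed to be empty, so the equivalence relation $\equiv_R$ on $A^+$ is exactly the restriction of the monoid equivalence relation on $A^*$ to $A^+ \times A^+$.

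Next, applying Theorem~\ref{thm_main} to the monoid presentation yields effectively constructible deterministic 2-tape automata $N$ and $N^R$ accepting the monoid word-problem relation and its reverse respectively. The target semigroup relation is then obtained from the monoid relation by intersecting with $A^+ \times A^+$. Since $A^+$ is a regular language, I would form a product of $N$ with a two-state device on each tape that records whether any letter of $A$ has yet been read before the end-marker $\$ $ arrives, and then restrict the accepting states to those product states in which both ``has-read'' flags have been set. The corresponding partial transition functions $\delta_1,\delta_2$ of the product remain well-defined partial functions, so determinism is preserved. An entirely analogous modification of $N^R$ handles the reverse relation.

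The verification that this product automaton accepts precisely $\{(u,v) \in A^+ \times A^+ \mid u \equiv_R v\}$ is a direct check on configurations, and effectiveness is inherited from the effectiveness clause in Theorem~\ref{thm_main}. There is no real obstacle here; the mildest point requiring care is to note that a deterministic 2-tape automaton in the sense of Section~\ref{sec_background} reads its end-markers via the same partial transition functions, so the ``has-read'' flag must be updated on letters from $A$ but frozen upon consuming $\$ $, which is a routine adjustment to the product construction.
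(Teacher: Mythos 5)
Your proposal is correct and follows essentially the same route as the paper: both reduce to Theorem~\ref{thm_main} by reading the data as a $C(4)$ monoid presentation and then restricting the resulting deterministic rational relation to $A^+ \times A^+$. The paper merely streamlines the final step by observing that, since no relation word is empty, this restriction amounts to deleting the single pair $(\epsilon, \epsilon)$, whereas you carry out an explicit (and equally valid) product construction with a regular-language filter on each tape.
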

\begin{proof}
Since the presentation has no empty relation words, the semigroup with
presentation $\langle A \mid R \rangle$ arises as the
subsemigroup of non-identity elements in the monoid with presentation
$\langle A \mid R \rangle$. It follows that
$$\lbrace (u, v) \in A^+ \times A^+ \mid u \equiv v \rbrace
\ = \ \lbrace (u, v) \in A^* \times A^* \mid u \equiv v \rbrace \setminus \lbrace (\epsilon, \epsilon) \rbrace.$$
Now it is easy to verify that a relation $R$ between free
monoids is a deterministic rational relation only if
$R \setminus \lbrace ( \epsilon, \epsilon) \rbrace$ is a deterministic
rational relation between free semigroups, so the result follows from
Theorem~\ref{thm_main}.
\end{proof}

\section{Consequences}\label{sec_consequences}

In this section we consider a number of interesting consequences and
corollaries of Theorem~\ref{thm_main}. We begin with some terminology
from language theory.

Let $A$ be a finite alphabet, and choose some arbitrary total order $\leq$
on the letters of $A$. Recall that the corresponding \textit{lexicographic order}
is an extension of this order to a total order $\leq_L$ on the free monoid $A^*$,
defined inductively by $\epsilon \leq_L w$ for all $w$, and for all
$x, y \in A$ and $u, v \in A^*$
we have $xu \leq_L yv$ if either $x \neq y$ and $x \leq y$, or $x = y$ and $u \leq_L v$.
Lexicographic order is a total order but not (unless $|A|=1$) a well-order,
since it contains infinite descending chains such as
$$b, \ ab, \ aab, \ aaab, \ \dots, \ a^i b, \ \dots$$
Hence, if $R$ is an equivalence relation on $A^*$ (even a rational one) there
is no guarantee that every equivalence class of $R$ will contain a lexicographically minimal
element. In the case that $R$ is \textit{locally finite} (that is, each equivalence
class is finite), however, every class
must clearly contain a unique lexicographically minimal element, and the
set of elements which are minimal in their class forms a \textit{cross-section}
of the relation, that is, a language of unique representatives for the
equivalence classes of the relation; we shall call these representatives
\textit{lexicographic normal forms}. Remmers showed that if
$\langle A \mid R \rangle$ is a $C(3)$ monoid [semigroup] presentation then the
corresponding equivalence relation on $A^*$ [respectively, $A^+$] is locally
finite
\cite{Higgins92,Remmers71}; it follows that every element of a $C(3)$ monoid
has a lexicographic normal form. Johnson \cite{Johnson85,Johnson86} showed that if $R$ is a deterministic
rational locally finite equivalence relation then the function
which maps each word to the corresponding lexicographic normal form can be
computed by a deterministic transducer. Thus, we obtain the following corollary
to Theorem~\ref{thm_main}.
\begin{corollary}
Let $\langle A \mid R \rangle$ be a monoid presentation satisfying $C(4)$
and suppose $A$ is equipped with a total order.
Then the relation
$$\lbrace (u, v) \in A^* \times A^* \mid u \equiv v \text{ and } v \text{ is a lexicographic normal form} \rbrace$$
is a deterministic rational function.
\end{corollary}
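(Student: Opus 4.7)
The plan is to assemble the corollary by combining three ingredients already discussed in the surrounding text, so the work is really one of citing and checking hypotheses rather than new construction. First I would observe that any presentation satisfying $C(4)$ \emph{a fortiori} satisfies $C(3)$, so the theorem of Remmers cited in the paragraph above applies: the equivalence relation $\equiv_R$ on $A^*$ is locally finite. Combined with the fact that lexicographic order on $A^*$ is a total order, local finiteness guarantees that every $\equiv_R$-class has a unique lexicographically minimal element, so the proposed relation is genuinely a function $A^* \to A^*$.

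Next I would invoke Theorem~\ref{thm_main} to conclude that the word problem relation $\{(u,v) \in A^* \times A^* : u \equiv v\}$ is deterministic rational. Having identified the word problem as a deterministic rational equivalence relation which is additionally locally finite, the hypotheses of Johnson's theorem (\cite{Johnson85,Johnson86}) are met. Johnson's result directly asserts that the function sending each word to the lexicographically minimal representative of its class is computable by a deterministic transducer; equivalently, its graph is a deterministic rational function. This graph is exactly the relation in the statement of the corollary, so we are done.

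The hardest part is really just keeping the hypotheses of Johnson's theorem straight: one needs both determinism of the rational relation (provided by Theorem~\ref{thm_main}) and local finiteness (provided by the $C(3)$ consequence of $C(4)$ via Remmers). There is no genuine obstacle here beyond careful bookkeeping; no new automaton needs to be constructed by hand, since Johnson's machinery does the work of combining the deterministic transducer for $\equiv_R$ with a lexicographic comparator. If one wanted to be completely self-contained one would have to reproduce (a version of) Johnson's construction, but for the purposes of this corollary a citation suffices.
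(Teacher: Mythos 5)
Your proposal is correct and follows exactly the paper's own route: the corollary is obtained by combining Remmers' local finiteness result for $C(3)$ presentations (which $C(4)$ presentations satisfy \emph{a fortiori}) with the deterministic rationality of the word problem from Theorem~\ref{thm_main}, and then applying Johnson's theorem on deterministic rational locally finite equivalence relations. The paper presents this as an immediate consequence of the preceding discussion rather than a separate proof, and your bookkeeping of the hypotheses matches it precisely.
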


The image of a rational function is always a regular language
\cite[Corollary~II.4.2]{Berstel79}) and deterministic rational functions
can be computed in linear time Johnson \cite[Theorem~5.1]{Johnson86} so
we have:
\begin{corollary}\label{cor_lexmin}
Let $\langle A \mid R \rangle$ be a monoid presentation satisfying $C(4)$
and suppose $A$ is equipped with a total order. Then the lexicographic
normal forms comprise a regular language of unique representatives for elements
of the monoid. Moreover,
there is an algorithm which, given a word $w$ in $A^*$, computes in linear
time the corresponding lexicographic normal form.
\end{corollary}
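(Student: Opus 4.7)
The plan is to assemble the three assertions of the corollary directly from the preceding corollary together with the two results cited in the paragraph immediately before its statement. Since essentially all the machinery is already in place, I do not anticipate any substantial obstacle; the argument is a short deduction rather than a new construction, and the only point that even briefly requires attention is verifying that each equivalence class actually contains a (unique) lexicographic minimum.

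First I would establish that the lexicographic normal forms form a set of unique representatives. Since the presentation satisfies $C(4)$ and hence $C(3)$, Remmers' theorem (recalled in the paragraph preceding the previous corollary) guarantees that each $\equiv_R$-class in $A^*$ is finite. Any finite totally ordered set has a unique minimum, so each equivalence class contains exactly one lexicographically minimal word; by definition this is the lexicographic normal form of the class. Hence the lexicographic normal forms provide a cross-section of $\equiv_R$, and so a set of unique representatives for the elements of the monoid presented by $\langle A \mid R \rangle$.

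Next I would observe that, by the preceding corollary, the relation sending each word $w \in A^*$ to the lexicographic normal form of $\ol{w}$ is a deterministic rational \emph{function}; it is total precisely because each equivalence class has a unique lex-minimum, as just established. The set of lexicographic normal forms is then exactly the image of this function, and \cite[Corollary~II.4.2]{Berstel79} (the image of a rational function is always a regular language) yields the regularity assertion. Finally, for the algorithmic claim, I would invoke \cite[Theorem~5.1]{Johnson86}, which asserts that any deterministic rational function admits a linear-time evaluation algorithm; applied to the normal-form function, this furnishes the required procedure computing, given $w \in A^*$, the lexicographic normal form of $\ol{w}$ in linear time.
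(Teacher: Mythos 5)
Your proposal is correct and follows exactly the paper's own route: the paper derives this corollary immediately from the preceding corollary (the normal-form relation is a deterministic rational function, whose totality rests on Remmers' local finiteness result) together with \cite[Corollary~II.4.2]{Berstel79} for regularity of the image and \cite[Theorem~5.1]{Johnson86} for linear-time evaluation. No differences worth noting.
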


A monoid $M$ is called \textit{rational} \cite{Sakarovitch87,Sakarovitch90}
if there exists a finite generating set $A$ for $M$ and a regular cross-section
$L \subseteq A^*$ for $M$ such that the normal forms in $L$ are computed
by a transducer.
\begin{corollary}
Every monoid admitting a $C(4)$ presentation is \textit{rational}.
\end{corollary}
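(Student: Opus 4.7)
The proposal is to assemble the pieces already developed in the preceding two corollaries, since Sakarovitch's definition of rationality demands exactly three ingredients and each has been provided essentially verbatim. Specifically, the definition requires (i) a finite generating set $A$ for $M$, (ii) a regular cross-section $L \subseteq A^*$ for $M$, and (iii) a transducer computing the map $A^* \to L$ sending each word to the representative of its equivalence class.

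Concretely, I would fix a $C(4)$ presentation $\langle A \mid R \rangle$ for $M$, equip $A$ with an arbitrary total order, and take $L$ to be the language of lexicographic normal forms. Corollary~\ref{cor_lexmin} asserts that $L$ is regular and forms a cross-section, supplying (i) and (ii). For (iii) I would invoke the immediately preceding corollary, which states that the normal-form relation
\[
\{(u,v) \in A^* \times A^* : u \equiv v \text{ and } v \in L\}
\]
is a deterministic rational function. Since a deterministic rational function is, a fortiori, a rational function computable by a (not necessarily deterministic) transducer, this is exactly the transducer required.

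The only step requiring any thought at all is the last comparison with Sakarovitch's terminology: one must verify that his notion of a transducer-computable cross-section is implied by the deterministic rational function produced by the previous corollary, which is immediate from the inclusion of deterministic rational transductions in rational transductions. There is therefore no genuine obstacle here; all of the real content is packed into Theorem~\ref{thm_main}, and the present corollary is a pure repackaging.
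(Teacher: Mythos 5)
Your proposal is correct and is precisely the argument the paper intends: the corollary is stated without proof as an immediate assembly of the preceding two corollaries, exactly matching Sakarovitch's definition as you describe. No further comment is needed.
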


Recall that the \textit{rational subsets} of a monoid $M$ are those which
can be obtained from finite subsets by the operations of union, product
and submonoid generation (the ``Kleene star'' operation). If $M$ is generated
by a finite subset $A$ then the rational subsets of $M$ are exactly the
images in $M$ of regular languages over $A$, which means they have natural
finite representations as finite automata over $A$. The
\textit{recognisable subsets} of $M$ are the homomorphic pre-images in $M$
of subsets of finite monoids. In the case that $M$ is a free monoid, the
rational subsets are just the regular languages. Kleene's Theorem asserts
that the rational subsets of a free monoid (that is, the regular languages)
coincide with the recognisable subsets \cite{Hopcroft69}. More generally,
a monoid in which the rational and recognisable subsets coincide is called
a \textit{Kleene monoid}, or sometimes is said to \textit{satisfy Kleene's Theorem}.
Rational monoids were originally introduced in an attempt to obtain a
concrete characterisation of Kleene monoids \cite{Sakarovitch87}, and
indeed every rational monoid is a Kleene monoid (although it transpires
that the converse does not hold). Thus, we obtain:
\begin{corollary}[Kleene's Theorem for Small Overlap Monoids]
Let $M$ be a monoid or semigroup admitting a $C(4)$ presentation, and $S$
a subset of $M$. Then $S$ is rational if and only if $S$ is recognisable.
\end{corollary}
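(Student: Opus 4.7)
The plan is to deduce the result almost immediately from the two preceding corollaries together with standard monoid-theoretic facts, so the task is mostly a matter of assembling existing pieces. Having already established that any monoid $M$ admitting a $C(4)$ presentation is rational in the sense of Sakarovitch, we can appeal to the fact \cite{Sakarovitch87} that every rational monoid is a Kleene monoid, meaning that its rational and recognisable subsets coincide. This handles the monoid case directly.

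More explicitly, I would argue both inclusions in turn. For the inclusion recognisable $\subseteq$ rational, I would invoke McKnight's theorem, which applies in any finitely generated monoid and requires nothing about the presentation beyond finite generation; since a $C(4)$ presentation is by definition finite, $M$ is finitely generated. For the harder inclusion rational $\subseteq$ recognisable, I would use the rational monoid structure from the previous corollary: if $L \subseteq A^*$ is the regular cross-section of lexicographic normal forms and $\nu : A^* \to L$ is the deterministic rational function computing them, then for any rational subset $S \subseteq M$ one can pull back a finite automaton for $S$ over the generators $A$, intersect with $L$ via the transducer $\nu$, and obtain a regular language $L_S \subseteq L$ of normal forms for the elements of $S$; this exhibits $S$ as recognisable, since $L_S$ being regular means there is a homomorphism from $A^*$ to a finite monoid separating $L_S$, which in turn induces the required recognising homomorphism on $M$ (this is precisely the Sakarovitch argument).

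For the semigroup case, I would deduce it from the monoid case by the same trick used in Corollary~\ref{cor_mainsemi}: the semigroup defined by a $C(4)$ presentation embeds as the set of non-identity elements of the corresponding $C(4)$ monoid, and the property of being rational or recognisable is preserved on passing between the two (modulo including or excluding the identity element, which is a single point and hence both rational and recognisable).

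The only conceivable obstacle is verifying that the Sakarovitch machinery for rational monoids applies with exactly the form of cross-section and transducer supplied by Corollary~\ref{cor_lexmin}, but since that corollary produces a regular cross-section with a deterministic (hence certainly rational) evaluation transducer, the hypotheses of \cite[Section~3]{Sakarovitch87} are met verbatim, so no additional work is required.
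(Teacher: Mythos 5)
Your proposal is correct and follows essentially the same route as the paper, which offers no separate proof but simply combines the preceding corollary (every monoid with a $C(4)$ presentation is rational in the sense of Sakarovitch) with the cited fact that every rational monoid is a Kleene monoid. Your additional unpacking of the two inclusions (McKnight's theorem and the normal-form transducer argument) and of the semigroup case is just an elaboration of that same appeal to \cite{Sakarovitch87}, not a different argument.
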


Recall that a collection of subsets of some given base set is called a
\textit{boolean algebra} if it contains the empty set and is closed under
union, intersection and complement. As another corollary of the
rationality of $M$ we obtain the following fact about rational subsets of
$M$.
\begin{corollary}\label{cor_booleanalgebra}
Let $M$ be a monoid admitting a $C(4)$ presentation $\langle A \mid R \rangle$.
Then the rational subsets of $M$ form a boolean algebra. Moreover, if rational
subsets of $M$ are represented by automata over $A$, then the operations of
union, intersection and complement are effectively computable.
\end{corollary}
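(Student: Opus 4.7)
The plan is to leverage the preceding Kleene's theorem corollary together with the normal form machinery of Corollary~\ref{cor_lexmin}. For the first assertion, I would observe that by the preceding corollary the rational subsets of $M$ coincide with the recognisable subsets. Since recognisable subsets of any monoid are closed under finite union, intersection, and complement (via the standard direct product construction on the target finite monoids, together with complementation of accepting subsets in the finite target), it follows at once that the rational subsets of $M$ form a boolean algebra. No special properties of $M$ beyond rational-equals-recognisable are needed for this part.

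For the effective part, closure under union is immediate, since the disjoint union of automata over $A$ representing $S_1$ and $S_2$ represents $S_1 \cup S_2$. The nontrivial operations are intersection and complement, and for these I would exploit the regular language $L \subseteq A^*$ of lexicographic normal forms together with the effectively computable deterministic transducer $\sigma \colon A^* \to L$ for the normal form map, both supplied by Corollary~\ref{cor_lexmin}. Given a rational subset $S$ of $M$ represented by a regular language $K \subseteq A^*$, I would compute the regular language $\sigma(K) \subseteq L$; this gives a canonical regular representative for $S$ inside the cross-section $L$, and any two representatives of the same rational subset yield the same subset of $L$.

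With canonical representations inside $L$ in hand, the boolean operations on rational subsets of $M$ reduce to the corresponding operations on regular subsets of $L$. For intersection of subsets represented by $K_1$ and $K_2$, I would form the regular language $\sigma(K_1) \cap \sigma(K_2)$ by the usual product construction; for complement of a subset represented by $K$, I would form $L \setminus \sigma(K)$ using the standard effective complement for regular languages over $A$. Since $L$ is a cross-section, the image in $M$ of each of these regular languages is precisely the intended subset, and the output is already a regular language over $A$ of the required form.

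The only technical point requiring care is showing that $\sigma(K)$ is effectively regular from $K$ and the transducer for $\sigma$. This is a standard automata-theoretic construction — compose the automaton for $K$ with the transducer for $\sigma$ and project onto the output tape — and is the main (though routine) obstacle in the proof; the rest of the argument is bookkeeping once the canonicalisation step is in place.
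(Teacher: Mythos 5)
Your proof is correct and follows essentially the same route as the paper: push each regular representative language through the (effectively computable) lexicographic normal form transducer to get a canonical regular subset of the cross-section $L$, then perform the boolean operations on these regular languages. The only cosmetic difference is that you derive the non-effective closure statement separately via the recognisable subsets, whereas the paper obtains both the abstract and effective claims in one stroke from the normal-form computation; this is a harmless redundancy, not a gap.
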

\begin{proof}
Let $\sigma : A^* \to M$ be the canonical
morphism mapping $A^*$ onto $M$, and let
$$\rho = \lbrace (u, v) \in A^* \times A^* \mid u \equiv v \text{ and } v \text{ is a lexicographic normal form} \rbrace.$$
Suppose $X, Y \in A^*$ are rational subsets,
with say $X = \hat{X} \sigma$ and $Y = \hat{Y} \sigma$ where
$\hat{X}, \hat{Y} \subseteq A^*$ are regular languages.
Then using the facts that $A^* \rho$ contains a unique representative for
every element and that $\rho \sigma = \sigma$, it is readily verified that
$M \setminus X = (A^* \rho \setminus \hat{X} \rho) \sigma$,
$X \cap Y = (\hat{X} \rho \cap \hat{Y} \rho) \sigma$ and $X \cup Y = (\hat{X} \rho \cup \hat{Y} \rho) \sigma$.
The result now follows from the fact that regular languages in a free monoid
form a boolean algebra with effectively computable operations.
\end{proof}

Recall that the \textit{rational subset membership problem} for a finitely
generated monoid $M$ is the problem
of deciding, given a rational subset of $M$ (represented by a finite
automaton over some fixed generating set for $M$) and an
element of $M$ (represented as a word over the same generating set), whether
the given element belongs to the given subset. The
decidability of this problem is independent of the chosen generating set
\cite[Corollary~3.4]{KambitesGraphRat}.
\begin{corollary}
Any monoid admitting a $C(4)$ presentation has decidable rational subset
membership problem (and hence decidable submonoid membership problem). 
\end{corollary}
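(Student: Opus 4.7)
The plan is to reduce rational subset membership to regular-language membership via the lexicographic normal forms established earlier. Fix a $C(4)$ presentation $\langle A \mid R \rangle$ for $M$, let $\sigma : A^* \to M$ be the canonical morphism, and let $\rho$ denote the deterministic rational function sending each word to its lexicographic normal form (from the earlier corollaries). Given a rational subset $X \subseteq M$, presented by a finite automaton over $A$ accepting a regular language $\hat{X} \subseteq A^*$ with $\hat{X}\sigma = X$, and a word $w \in A^*$ representing $m \in M$, I want to decide whether $m \in X$.

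The key observation is that $m \in X$ if and only if some $v \in \hat{X}$ satisfies $v \equiv w$, and since $\rho$ picks out a unique representative in each $\equiv$-class, this is equivalent to the normal form $w\rho$ lying in the set $\hat{X}\rho = \{v\rho : v \in \hat{X}\}$ of normal forms of elements of $X$. The algorithm is then: compute $w\rho$ in linear time using Corollary~\ref{cor_lexmin}; compute an automaton for $\hat{X}\rho$ by composing the given automaton for $\hat{X}$ with the transducer for $\rho$ and projecting onto the output tape (the image of a regular language under a rational function is effectively regular by standard automata theory, and the transducer for $\rho$ is effectively constructible from the $C(4)$ presentation by Theorem~\ref{thm_main} together with the Johnson construction); and finally test $w\rho \in \hat{X}\rho$, which is decidable since membership in a regular language is decidable.

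The submonoid membership problem follows immediately: any finitely generated submonoid $\langle m_1,\dots,m_n\rangle$ is the image under $\sigma$ of the regular language $\{w_1,\dots,w_n\}^*$ (where $w_i$ represents $m_i$), hence is a rational subset, so the previous argument applies. The only mildly delicate point is to note that although the representing automaton is specified over the distinguished $C(4)$ generating set $A$, this is no loss of generality because decidability of rational subset membership is independent of the choice of finite generating set, as cited in the paper. I anticipate no essential obstacle: the proof is a matter of assembling the automata-theoretic ingredients already established.
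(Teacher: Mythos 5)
Your proof is correct, and it differs from the paper's only in packaging. The paper reduces membership to an emptiness test by invoking Corollary~\ref{cor_booleanalgebra}: it computes an automaton for a language $\hat{Y}$ with $\hat{Y}\sigma = \hat{X}\sigma \cap \lbrace w \rbrace \sigma$ and tests whether $\hat{Y}$ is empty. You instead work with the normal-form function $\rho$ directly, testing whether $w\rho \in \hat{X}\rho$. Since the paper's proof of Corollary~\ref{cor_booleanalgebra} realises the intersection precisely as $(\hat{X}\rho \cap \lbrace w \rbrace \rho)\sigma = (\hat{X}\rho \cap \lbrace w\rho \rbrace)\sigma$, the two algorithms coincide once unwound; yours is marginally more direct, bypasses the boolean-algebra corollary, and makes explicit the effectiveness ingredients (an effectively constructible transducer for $\rho$ via Theorem~\ref{thm_main} and the Johnson construction, and effective regularity of images of regular languages under rational functions) that the paper leaves implicit in the phrase ``effectively computable'' of Corollary~\ref{cor_booleanalgebra}. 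Your treatment of the submonoid case and of independence from the generating set matches the paper's. The one point worth flagging is cosmetic: the linear-time computation of $w\rho$ from Corollary~\ref{cor_lexmin} is not actually needed for decidability (any effective computation of $w\rho$, or indeed the emptiness test on $\hat{X}\rho \cap \lbrace w\rho\rbrace$, suffices), though it does yield a better complexity bound for the query word.
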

\begin{proof}
Suppose $M$ has $C(4)$ presentation $\langle A \mid R \rangle$, and let
$\sigma : A^* \to M$ be once again the canonical morphism.
Suppose we are given a finite automaton recognising a language $\hat{X} \subseteq A^*$
(representing the rational subset $\hat{X} \sigma \subseteq M$) and a 
$w \in A^*$ (representing the element $w \sigma \in M$). Certainly we
can compute from the latter a finite automaton recognising the singleton language
$\lbrace w \rbrace$. Hence, by Corollary~\ref{cor_booleanalgebra} we
can compute a finite automaton recognising a language $\hat{Y} \subseteq A^*$
such that $\hat{Y} \sigma = \hat{X} \sigma \cap \lbrace w \rbrace \sigma$. But
$w \sigma \in \hat{X} \sigma$ if and only if
$\hat{X} \sigma \cap \lbrace w \rbrace \sigma$ is non-empty, so this
reduces the problem to deciding emptiness of the regular language $\hat{Y}$;
the latter is well known to be decidable.
\end{proof}

A monoid $M$ is called \textit{asynchronous automatic} (see, for example,
\cite{Hoffmann01}) if there exists a finite generating set $A$ and a regular
language $L \subseteq A^*$ such that $L$ contains a representative for every
element of $M$, and the relation
$$\lbrace (u,v) \in A^* \times A^* \mid ua \equiv v \rbrace$$
is a rational transduction for each $a \in A$ and for $a = \epsilon$.
It has been shown \cite[Theorem~6.2]{Hoffmann01} that rational monoids are
asynchronous automatic, so we also obtain the following.
\begin{corollary}
Every monoid admitting a $C(4)$ presentation is asynchronous automatic.
\end{corollary}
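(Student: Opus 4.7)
The plan is to derive this corollary essentially for free from what has already been assembled in this section. The previous corollary establishes that every monoid $M$ with a $C(4)$ presentation is rational in the sense of Sakarovitch, and the cited theorem \cite[Theorem~6.2]{Hoffmann01} asserts that every rational monoid is asynchronous automatic. So the main body of the proof is simply to chain these two statements together, naming the generating set and cross-section inherited from the rationality corollary (for us, $A$ and the regular language $L$ of lexicographic normal forms produced by Corollary~\ref{cor_lexmin}).

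To be a little more self-contained, I would then note the (optional) direct verification that the required multiplier relations are rational transductions. Concretely, for each generator $a \in A$, the relation
\[ \lbrace (u,v) \in A^* \times A^* \mid ua \equiv v \rbrace \]
can be obtained from the word-problem relation of Theorem~\ref{thm_main} by pre-composing the first coordinate with the length-preserving-modulo-one rational function $u \mapsto ua$; since rational transductions are closed under composition with rational functions, this is rational. The case $a = \epsilon$ is immediate because the word-problem relation itself is rational. Taking $L$ to be either all of $A^*$ or the regular cross-section of lexicographic normal forms from Corollary~\ref{cor_lexmin} then satisfies the definition of asynchronous automaticity.

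There is no real obstacle here, so no part of the argument requires substantial work; the only thing to be careful about is terminology, since several definitions of ``asynchronous automatic'' for monoids appear in the literature and one must match the formulation used in \cite{Hoffmann01} (or, equivalently, the one recalled just before the corollary). Assuming the definition is as recalled, the proof reduces to the single sentence: this is immediate from the previous corollary together with \cite[Theorem~6.2]{Hoffmann01}.
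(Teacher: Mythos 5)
Your proposal is correct and matches the paper's own argument, which is exactly the one-line deduction: the preceding corollary shows every $C(4)$ monoid is rational, and \cite[Theorem~6.2]{Hoffmann01} states that rational monoids are asynchronous automatic. The extra direct verification you sketch is unnecessary but harmless.
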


We have already remarked that small overlap conditions are the natural
semigroup-theoretic analogue of the small cancellation conditions extensively
used in combinatorial group theory (see, for example, \cite{Lyndon77}). It
is well known that a group admitting a finite presentation satisfying sufficiently
strong small cancellation conditions is \textit{word hyperbolic} in the sense
of Gromov \cite{Gromov87}. The usual geometric definition of a word hyperbolic
group has no obvious counterpart for more general monoids or semigroups;
however, Gilman \cite{Gilman02} has given a language-theoretic characterisation of
word hyperbolic groups. Specifically, he showed that a group is word
hyperbolic if and only if it admits a finite generating set $A$ and a regular
language $L \subseteq A^*$
containing a representative for every element of $M$ such that the
\textit{multiplication table}
$$\lbrace u \# v \# w^R \mid uv \equiv w \rbrace$$
is a context-free language, where $\#$ is a new symbol not in $A$.
Motivated by this result, Duncan
and Gilman \cite{Duncan04} have suggested calling a monoid \textit{word hyperbolic} if it
satisfies this language-theoretic condition. Since every rational monoid
is word hyperbolic \cite[Theorem~6.3]{Hoffmann01} we can deduce that every
$C(4)$ monoid is word hyperbolic in this sense.
\begin{corollary}
Every monoid admitting a $C(4)$ presentation is word hyperbolic in the sense
of Duncan and Gilman (and furthermore admits a hyperbolic structure with
unique representatives).
\end{corollary}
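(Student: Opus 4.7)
The plan is to read off both claims from results already at hand. For word hyperbolicity itself, the preceding corollary gives that every monoid admitting a $C(4)$ presentation is rational in the sense of Sakarovitch, and Hoffmann's theorem \cite[Theorem~6.3]{Hoffmann01}, quoted in the paragraph preceding the statement, asserts that every rational monoid is word hyperbolic in the sense of Duncan and Gilman. Composing these two facts immediately yields the main assertion, so no further work is required for that half.

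The parenthetical claim about unique representatives requires slightly more thought. The natural candidate for a regular language of unique representatives is the set $L \subseteq A^*$ of lexicographic normal forms produced by Corollary~\ref{cor_lexmin}: it is a regular cross-section, so it meets each equivalence class in exactly one word. What remains to be verified is that the multiplication table
$$T_L \;=\; \lbrace u \# v \# w^R \mid u, v, w \in L \text{ and } uv \equiv w \rbrace$$
is a context-free language over $A \cup \lbrace \# \rbrace$.

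The approach I would take is to inspect the proof of \cite[Theorem~6.3]{Hoffmann01}: Hoffmann's construction in fact produces a context-free multiplication table directly from a transducer that computes normal forms on a regular cross-section, and Corollary~\ref{cor_lexmin} supplies exactly such a transducer for $L$. Feeding this transducer into Hoffmann's construction should therefore yield $T_L$ as a context-free language, with no further modification. The main obstacle, if one wishes to avoid unpacking Hoffmann's argument, is converting an arbitrary context-free multiplication table over some other cross-section to one over $L$; this can be handled by applying the rational function that sends each word to its lexicographic normal form (rational by Corollary~\ref{cor_lexmin}) componentwise to the triples in the given table, and then invoking the closure of context-free languages under rational transductions.
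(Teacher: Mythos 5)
Your proof matches the paper's: the main claim is obtained exactly as you describe, by composing the rationality corollary with \cite[Theorem~6.3]{Hoffmann01}, and the paper leaves the parenthetical about unique representatives to the observation (which you spell out in more detail) that the rational structure already supplies a regular cross-section meeting each class exactly once. Your additional discussion of how to transfer the context-free multiplication table to the lexicographic normal forms is sound but goes beyond what the paper records.
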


\section*{Acknowledgements}

This research was supported by an RCUK Academic Fellowship. The author
would like to thank A.~V.~Borovik and V.~N.~Remeslennikov for a number of
helpful conversations.

\bibliographystyle{plain}

\def\cprime{$'$} \def\cprime{$'$} \def\cprime{$'$}

\end{document}